\newcommand{\PP}{\mathbb{P}}
\newcommand{\RR}{\mathbb{R}}
\newcommand{\CCC}{\mathscr{C}}
\newcommand{\TTT}{\mathscr{T}}
\renewcommand{\P}{\mathcal{P}}
\newcommand{\modulo}{\ \mathrm{mod}\ }
\newtheorem{theo}{Theorem}[section]
\newtheorem*{theom}{Theorem}
\newtheorem{prop}[theo]{Proposition}
\newtheorem{lem}[theo]{Lemma}
\theoremstyle{definition}
\theoremstyle{remark}
\newtheorem{remark}[theo]{Remark}
\theoremstyle{remark}
\newtheorem{example}[theo]{Example}
\newenvironment{expl}[1]{
    \begin{example}#1}{
    \xqed{\lozenge}\end{example}
}
\newcommand{\xqed}[1]{
    \leavevmode\unskip\penalty9999 \hbox{}\nobreak\hfill
    \quad\hbox{\ensuremath{#1}}}
\begin{document}
 
 
\title{On the double tangent of projective closed curves}
\author{Thomas Blomme}
\email{thomas.blomme@unige.ch}
\classification{14N10}
\keywords{Enumerative geometry, bitangents, plane algebraic curves}
\address{Universit\'e de Neuch\^atel, rue \'Emile Argan 11,
Neuch\^atel 2000, Switzerland}

\begin{abstract}
We generalize a previous result by Fabricius-Bjerre \cite{fabricius1962double} from curves in $\RR^2$ to curves in $\RR P^2$. Applied to the case of real algebraic curves, this recovers the signed count of bitangents of quartics introduced by Larson-Vogt \cite{larsonvogt2021enriched} and proves its positivity, conjectured by Larson-Vogt. Our method is not specific to quartics and applies to algebraic curves of any degree.
\end{abstract}

\maketitle

\tableofcontents

\section{Introduction}

We consider curves in $\RR^2$ and $\RR P^2$. By \textit{curve} we mean a closed immersed $C^1$ curve which is the union of finitely many strictly convex arcs. Such a curve has a finite number of \textit{flexes} (points where the curve passes through its tangent), \textit{nodes}  (also called double points or self-intersection) and double tangents, also called \textit{bitangents}, which are lines tangent at two points of the curve. Throughout the paper, we assume all our curves to be \textit{generic} in the sense that double points are simple, no tangent at a flex is tangent elsewhere, and a double tangent has only two tangency points. The normalization of the curve (desingularizing the double points) may not be connected, and its connected components are called \textit{components} of the curve.

In 1962, Fabricius-Bjerre \cite{fabricius1962double} introduced a signed count of bitangents for such curves in $\RR^2$. Let $C\subset\RR^2$ be a curve and $D$ be a bitangent to $C$. There are two types of bitangents according to the relative position of the curve near the tangency points (See Figure \ref{fig-type-bitangents}):
    \begin{itemize}[label=$\circ$]
        \item \textit{exterior} (type $T$) if the arcs of $C$ near the tangency points lie on the same side of $D$,
        \item \textit{interior} (type $S$) if the arcs of $C$ near the tangency points lie on opposite sides of $D$.
    \end{itemize}
The Fabricius-Bjerre sign of a bitangent is $+$ for type $T$ and $-$ for type $S$. If $C$ has $i(C)$ flexes and $n(C)$ self-intersections, Fabricius-Bjerre \cite{fabricius1962double} states that the signed count $\sigma(C)$ of bitangents satisfies
$$\sigma(C)=n(C)+\frac{i(C)}{2}.$$
Although the proof of \cite{fabricius1962double} seems to assume the curve has only one component, such an assumption is actually not necessary.

\begin{figure}
\centering
\begin{tabular}{cc}
\begin{tikzpicture}[x=0.75cm,y=0.75cm]
\clip (-1,-0.2) rectangle (5.2,1.2);
\draw[blue] (0,0) to (5,0);
\draw (1,1) circle(1);
\draw (4,1) circle(1);
\end{tikzpicture} & \begin{tikzpicture}[x=0.75cm,y=0.75cm]
\clip (-1,-1.2) rectangle (5.2,1.2);
\draw[blue] (0,0) to (5,0);
\draw (1,-1) circle(1);
\draw (4,1) circle(1);
\end{tikzpicture} \\
$(T)$, exterior bitangent, $+$ & $(S)$, interior bitangent, $-$ \\
\end{tabular}
\caption{\label{fig-type-bitangents}Types of bitangents}
\end{figure}

Our main result is to provide a generalization of the Fabricius-Bjerre count to the projective setting. To extend the sign definition, we choose a line $L_\infty\subset\RR P^2$ and consider curves generically transverse to $L_\infty$: no line passing through a point of $C\cap L_\infty$ is a bitangent or tangent at a flex, which means $L_\infty$ is transverse to $C$ union its bitangents and tangent at flexes. Let $\TTT_\infty(C)$ be the set of tangents to $C$ at points of $C\cap L_\infty$. As $\RR P^2\backslash L_\infty=\RR^2$, we choose as sign for the bitangents the well-defined Fabricius-Bjerre sign in this affine chart. Notice it depends on the choice of $L_\infty$.

\begin{theom}\ref{theo-projective-case}
Let $C$ a curve in $\RR P^2$ transverse to $L_\infty$ with $i(C)$ flexes, $n(C)$ nodes and $a$ intersection points with $L_\infty$. Then, we have the following signed count of bitangents:
$$\sigma(C)=n(C)+\frac{i(C)}{2}+\frac{a(a-2)}{2}-\sum_{T\in\TTT_\infty(C)} (|C\cap T|-1).$$
\end{theom}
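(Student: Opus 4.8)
The plan is to run the counting argument behind the affine Fabricius--Bjerre theorem \cite{fabricius1962double} on the restriction $C\cap\RR^2$, treating the $a$ intersection points with $L_\infty$ as producing explicit boundary contributions. Recall that the affine proof is a ``rotating tangent'' double count: as the basepoint $p$ traverses a closed curve, one tracks a combinatorial invariant built from the tangent line $\ell_p$ at $p$ and its further intersections with the curve (sorted by the side on which $C$ lies at $p$ and the sense in which $C$ crosses $\ell_p$ at each further intersection point). This invariant is piecewise constant and its net change over the full loop vanishes; the jumps occur exactly at flexes, at configurations where $\ell_p$ sweeps through a node, and at instants where $\ell_p$ is a bitangent (signed by its type $T$ or $S$), and summing them to zero yields $\sigma=n+i/2$. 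Since the sign of a bitangent in the statement is by definition its affine sign in the chart $\RR P^2\setminus L_\infty=\RR^2$, this same invariant is the right object to use here.

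First I would carry this sweep out for $C\subset\RR P^2$. The three affine event types reproduce the contributions $n(C)$, $i(C)/2$ and $\sigma(C)$ verbatim. The novelty is a fourth event: as $p$ passes through one of the $a$ points $p_i\in C\cap L_\infty$, the tangent $\ell_p$ limits to the tangent $T_i\in\TTT_\infty(C)$, and in the affine chart some intersection points of $\ell_p$ with $C$ escape to, or re-enter from, infinity. Writing out the jump of the invariant at each such crossing and summing over the closed curve, the vanishing of the total change rearranges to
$$\sigma(C)=n(C)+\frac{i(C)}{2}+J,$$
where $J$ is the total contribution of the $a$ crossings of $L_\infty$. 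The theorem then amounts to the identity $J=\tfrac{a(a-2)}{2}-\sum_{T\in\TTT_\infty(C)}(|C\cap T|-1)$. Note that since $T_i$ is transverse to $L_\infty$, one has $T_i\cap L_\infty=\{p_i\}$, so the $|C\cap T_i|-1$ further intersection points of $T_i$ with $C$ all lie in the affine chart.

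The crux is the evaluation of $J$, and I expect it to split into a local and a global part. The local part at $p_i$ records the further intersections of the limiting tangent $T_i$ with $C$: each of the $|C\cap T_i|-1$ affine intersection points of $T_i$ contributes a unit jump as $\ell_p$ rotates through $T_i$, producing the term $-\sum_{T}(|C\cap T|-1)$. The global part is quadratic in $a$ and is most transparent through a complementary ``completion'' picture: capping $C\cap\RR^2$ off to a closed affine curve forces one connecting arc near infinity per point $p_i$, joining the two ends that run off in the antipodal directions $\theta_i,\theta_i+\pi$; distinct connecting arcs cross in $\binom{a}{2}$ new nodes, while each single point $p_i$ behaves, for the tangent invariant, as a half-flex and contributes $-\tfrac12$. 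Together these give $\binom{a}{2}-\tfrac a2=\tfrac{a(a-2)}{2}$.

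The main obstacle is precisely this bookkeeping at $L_\infty$: one must (i) fix orientation and side conventions so that the affine $T/S$ sign is respected as branches cross $L_\infty$ and reappear antipodally, since a tangency event can flip sign depending on which side of $L_\infty$ the relevant arcs lie; (ii) show that the quadratic count is independent of the cyclic arrangement of the directions $\theta_i$ and of the auxiliary capping, so that $J$ depends only on $a$ and on the tangents $T_i$; and (iii) handle the degenerate interactions when a tangent $T_i$ happens to pass near another point $p_j\in L_\infty$. A component of $C$ disjoint from $L_\infty$ produces no crossing event, hence contributes nothing to $J$ and enters only through the unchanged affine terms, so the verification of $J$ may focus on the components actually meeting $L_\infty$.
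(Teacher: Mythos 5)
Your set-up (the rotating-tangent sweep with jumps, using the affine Fabricius--Bjerre sign in the chart $\RR P^2\setminus L_\infty$) is the same as the paper's, but your accounting of the new events has a genuine gap. You assert that the only new jumps occur at the $a$ parameters where the moving point itself crosses $L_\infty$, and that $J$ is ``the total contribution of the $a$ crossings of $L_\infty$.'' That is not true for any invariant built from the two affine half-tangents: the quantity $|D_+\cap C|-|D_-\cap C|$ also jumps whenever an intersection point of the moving tangent line with $C$ escapes through infinity, which happens at \emph{every} parameter where the tangent line passes through one of the points of $C\cap L_\infty$ while the basepoint is still in the affine chart. These events are exactly the tangent lines to $C$ drawn from the $a$ points at infinity of $C$; each contributes $\pm 2$ according to the concavity of $C$ at the tangency, and summing them is the real content of the projective statement. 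The paper handles this with a separate Morse-theoretic count (Lemma \ref{lem-signed-count-asymptotic-line}): the signed number of tangents to $C$ through the point at infinity of $T\in\TTT_\infty(C)$ equals $1+|C\cap T|-a$. It is from this count that the quadratic term $\frac{a(a-2)}{2}$ and half of the term $-\sum_{T}(|C\cap T|-1)$ actually arise; the crossings of $L_\infty$ by the basepoint only supply the other half, since at such a crossing all non-tangency intersection points swap rays simultaneously, a single jump of $2(|C\cap T|-1)$.

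Consequently your evaluation of $J$ does not hold up. The ``local part'' you describe would extract the full $-\sum_T(|C\cap T|-1)$ from the $L_\infty$-crossings alone, which is off by a factor of two under the natural normalization, and the ``global part'' rests on a capping heuristic that is not a proof: if you close up the affine arcs near infinity and apply the affine theorem to the capped curve, the connecting arcs create not only $\binom{a}{2}$ new crossings but also new bitangents and tangencies between the capping arcs and $C$ (lines through points near each $p_i$ that are tangent to $C$ elsewhere) as well as new flexes on the arcs themselves; these are precisely the tangents-through-$p_i$ events your sweep omitted, so the bookkeeping you set aside is the actual work, and the claim that each $p_i$ ``behaves as a half-flex contributing $-\tfrac12$'' is unsubstantiated. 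To repair the argument you must either enumerate the jumps at the parameters where the tangent passes through a point of $C\cap L_\infty$ and prove a statement equivalent to Lemma \ref{lem-signed-count-asymptotic-line}, or carry out the capping construction rigorously and control all bitangents and flexes it introduces --- which amounts to the same count.
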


The case of \cite{fabricius1962double} corresponds to curve such that $C\cap L_\infty=\emptyset$ (i.e. $a=0$).

If $\CCC$ is a real algebraic curve, Pl\"ucker \cite{Plu34,Plu39} already knew the number of complex bitangents, which only depends on the degree. However, the number of real bitangents depends on the curve. One may hope that there exists some suitable signed counts of the bitangents that is to some extent invariant. In \cite{blomme2024bitangents}, Blomme-Brugall\'e-Garay proved the existence of a signed count only depending on the topology of the real part for curves of even degree. Prior to the latter, Larson-Vogt proved the existence of a different signed count for quartics, with some conjectures concerning so-called \textit{quadratic enrichment} to more general fields. The Fabricius-Bjerre sign actually coincides with the Larson-Vogt sign \cite{larsonvogt2021enriched} used to count real bitangents to a real quartic curve. In particular, we have surprisingly two different signed counts for curves in $\RR^2$: the one from \cite{blomme2024bitangents} and the one from \cite{fabricius1962double}.

In the real algebraic setting, bitangents can be \textit{split} or \textit{non-split} according to whether the tangency points are real or complex conjugate. Given a smooth curve $\CCC$, Klein's formula \cite{klein1876formula} provides a relation between the number of non-split bitangents $t_0(\CCC)$ and the number of real flexes. It is possible to apply Theorem \ref{theo-projective-case} coupled to Klein's formula to a real algebraic curve transverse to a line $L_\infty$ and get a signed count of the bitangents: $\rho(\CCC)=t_0(\CCC)+\sigma(\RR\CCC)$.

\begin{theom}\ref{theo-algebraic-case}
Let $\CCC$ be a smooth real algebraic curve of degree $d$ in $\RR P^2$ intersecting $L_\infty$ transversally at exactly $a$ points. Then, we have the following signed count:
$$\rho(\CCC)=\frac{d(d-2)}{2}+\frac{a(a-2)}{2}-\sum_{T\in\TTT_\infty(\RR\CCC)}(|\RR\CCC\cap T|-1).$$
Furthermore, the integer $\rho(\CCC)$ is even and non-negative
\end{theom}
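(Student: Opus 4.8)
The plan is to derive Theorem~\ref{theo-algebraic-case} from Theorem~\ref{theo-projective-case} by specializing the latter to the real locus $\RR\CCC$ of a smooth real algebraic curve and then feeding in the classical Pl\"ucker and Klein relations to rewrite the topological quantities $i(\RR\CCC)$ and $n(\RR\CCC)$ in terms of the degree $d$. First I would observe that for a smooth plane curve the real locus $\RR\CCC$ has no nodes, so $n(\RR\CCC)=0$, and its real flexes are exactly the real points of the flex (Hessian) divisor. The substitution of $n(C)=0$ into the formula of Theorem~\ref{theo-projective-case} leaves $\sigma(\RR\CCC)=\tfrac{i(\RR\CCC)}{2}+\tfrac{a(a-2)}{2}-\sum_{T\in\TTT_\infty(\RR\CCC)}(|\RR\CCC\cap T|-1)$. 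The goal is then to show that the combination $\rho(\CCC)=t_0(\CCC)+\sigma(\RR\CCC)$ converts the term $\tfrac{i(\RR\CCC)}{2}$ into $\tfrac{d(d-2)}{2}$.

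The key step is Klein's formula. For a smooth real curve of degree $d$, Klein's relation \cite{klein1876formula} reads $i(\RR\CCC)+2t_0(\CCC)=d(d-2)$, equating the total flex count against non-split bitangents in a weighted fashion. Dividing by two gives $t_0(\CCC)+\tfrac{i(\RR\CCC)}{2}=\tfrac{d(d-2)}{2}$, which is exactly the combination appearing once we add $t_0(\CCC)$ to $\sigma(\RR\CCC)$. Substituting yields
$$\rho(\CCC)=\frac{d(d-2)}{2}+\frac{a(a-2)}{2}-\sum_{T\in\TTT_\infty(\RR\CCC)}(|\RR\CCC\cap T|-1),$$
establishing the displayed count. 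I expect the only subtlety here is to confirm that the normalizations of ``flex'' and ``non-split bitangent'' used in Theorem~\ref{theo-projective-case} agree with those in Klein's original statement, and that the genericity hypotheses on $\CCC$ relative to $L_\infty$ are compatible with smoothness; these are bookkeeping checks rather than genuine obstacles.

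The harder and more interesting half of the statement is the parity and non-negativity of $\rho(\CCC)$. For evenness I would argue that $\rho(\CCC)$, being equal to $t_0(\CCC)+\sigma(\RR\CCC)$, can be read off the right-hand side: $\tfrac{d(d-2)}{2}$ and $\tfrac{a(a-2)}{2}$ are each integers, and I would check their parities together with the parity of $\sum_{T}(|\RR\CCC\cap T|-1)$ using Bezout-type congruences (a real line meets a degree-$d$ curve in a number of real points congruent to $d$ modulo $2$, so each summand has controlled parity, and $a\equiv d \bmod 2$ as well). Combining these congruences should force $\rho(\CCC)$ to be even. For non-negativity, the natural strategy is to show $\rho(\CCC)$ is independent of the auxiliary line $L_\infty$ and of the particular smooth curve in its connected component of the discriminant complement, so that it is a deformation invariant; then one evaluates it on a convenient model (for instance a curve meeting $L_\infty$ in the maximal number of points, or a small perturbation of a union of lines) where the count is manifestly non-negative. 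I expect the non-negativity to be the main obstacle, since it is precisely the positivity conjectured by Larson--Vogt and cannot follow from the formula termwise---the corrective sum $\sum_T(|\RR\CCC\cap T|-1)$ is subtracted---so the proof must exploit either invariance under deformation or a more refined accounting of which bitangents contribute, rather than a naive bound on each term.
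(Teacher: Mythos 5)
The first half of your proposal (the displayed formula) follows the paper exactly: substitute $n(\RR\CCC)=0$ into Theorem \ref{theo-projective-case} and use Klein's relation $i(\RR\CCC)+2t_0(\CCC)=d(d-2)$ to trade $t_0(\CCC)+\tfrac{i(\RR\CCC)}{2}$ for $\tfrac{d(d-2)}{2}$. Your parity sketch is also the paper's argument: Bezout gives $a\equiv d \bmod 2$ and $|T\cap\RR\CCC|-1\equiv d\bmod 2$ (the tangency point counts twice), and these congruences force $\rho(\CCC)\equiv\tfrac{a^2-d^2}{2}\equiv 0\bmod 2$.

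The genuine gap is in your treatment of non-negativity. You assert that it ``cannot follow from the formula termwise'' and propose instead to show that $\rho(\CCC)$ is independent of $L_\infty$ and deformation invariant, then evaluate it on a model curve. That strategy fails at the first step: $\rho(\CCC)$ is \emph{not} independent of $L_\infty$. The right-hand side of the formula visibly depends on $a$ and on the tangents at infinity; for instance, for a smooth quartic one gets $\rho(\CCC)=4$ when $a=0$ but $\rho(\CCC)$ can equal $0$ or $2$ when $a=2$, so no invariance argument can be run. Moreover, the termwise bound you dismissed is precisely the paper's proof: Bezout gives $|T\cap\RR\CCC|-1\leqslant d-2$ for each of the $a$ tangents $T\in\TTT_\infty(\RR\CCC)$ (the tangency point absorbs multiplicity $2$ out of $d$), hence
$$\rho(\CCC)\ \geqslant\ \frac{d(d-2)}{2}+\frac{a(a-2)}{2}-a(d-2)\ =\ \frac{(d-a)(d-a-2)}{2}\ \geqslant\ 0,$$
the last inequality because $a\leqslant d$ and $d\equiv a\bmod 2$ rules out $d-a=1$. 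So the positivity, while it is indeed the conjectural content of Larson--Vogt, does follow from a naive per-tangent Bezout estimate once the formula is in hand; your proposal as written does not contain a viable route to it.
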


In the case of quartics non-intersecting $L_\infty$, we recover the signed count from \cite{larsonvogt2021enriched}. Furthermore, the non-negativity statement proves \cite[Conjecture 2]{larsonvogt2021enriched} for any degree, already proven for quartics by Kummer-McKean \cite{kummer2024bounding} through different methods.

It would be very interesting to see if \textit{quadratic enrichments} exist for fields other than $\RR$, where a sined count is essentially a signed count. Actually, \cite{larsonvogt2021enriched} already proposes an enriched way of counting bitangents, though it depends on the choice of a line $L_\infty$. Theorem \ref{theo-algebraic-case} suggests such a count should be related to the intersection points between the curve and its tangents to points intersecting $L_\infty$. Another direction would be to find a quadratic enrichment of the signed count of \cite{blomme2024bitangents}, which has the advantage of not depending on the choice of a line, though it is not clear how to do so.

\medskip

\textit{Acknowledgments.} The author is supported by the SNF grant 204125 and would like to thank Erwan Brugall\'e for suggesting to work on bitangents and write this paper in the first place, as well as commenting on a first version.

\section{Classical Fabricius-Bjerre Theorem}	
\label{sec-classical-FB}

Let $C$ be a closed curve in $\RR^2$ with components $C_j$. For each component, we choose a regular parametrization $\gamma_j:I_j\to\RR^2$ inducing an orientation. We denote the tangent at $\gamma_j(t_j)$ by $D(\gamma_j(t_j))=\gamma_j(t_j)+\RR\gamma'_j(t_j)$. We also consider the half-tangents
$$D_+(\gamma_j(t_j))=\gamma_j(t_j)+\RR_{>0}\gamma'_j(t_j) \text{ and }D_-(\gamma_j(t_j))=\gamma_j(t_j)+\RR_{<0}\gamma'_j(t_j).$$
They are switched when the orientation of $C$ is reversed. Let $t(C)$ (resp. $s(C)$) be the number of bitangents of type $T$ (resp. $S$), so that the signed count is $\sigma(C)=t(C)-s(C)$.

The following Theorem is first due to Fabricius-Bjerre \cite{fabricius1962double} in the case where the curve $C$ has a unique component. For convenience of the reader, we recall its proof. We also notice that the proof also works for curves with possibly several components.

\begin{theo}\label{theo-affine-case}
Let $C$ be a curve in $\RR^2$ with $n(C)$ nodes and $i(C)$ flexes. We have the following identity:
$$\sigma(C)=n(C)+\frac{i(C)}{2} .$$
\end{theo}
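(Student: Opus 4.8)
The plan is to run a moving--tangent argument on the oriented curve. For a point $p=\gamma_j(t)$ I would let $e_+(p)$ (resp.\ $e_-(p)$) be the number of transverse intersection points of the forward half-tangent $D_+(p)$ (resp.\ the backward half-tangent $D_-(p)$) with $C$. Both are integer-valued and locally constant in $t$ away from a finite set of critical parameters, and after traversing a full component each returns to its initial value; hence the total jump of any fixed combination of $e_+$ and $e_-$ over the closed loop is zero. The identity will follow by tabulating these jumps and summing over all components $C_j$.

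First I would enumerate the parameters at which $e_\pm$ can change. As $p$ moves forward, a transverse intersection of a half-tangent with $C$ can only be created, destroyed, or migrate between the two rays when: (i) $p$ crosses a flex, where $C$ locally passes through its own tangent; (ii) the tangent line $D(p)$ momentarily becomes a bitangent, with the second tangency point lying on one of the two rays; or (iii) a ray sweeps across a node, or the basepoint $p$ itself passes through a node. Between consecutive such parameters every transverse intersection varies continuously in number, so $e_\pm$ is constant there.

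Next I would compute the jump at each event. At a flex the nearby intersection of the tangent \emph{line} with $C$ migrates from the forward ray to the backward ray (or conversely), so $e_+-e_-$ jumps by $\pm 2$ while $e_++e_-$ is unchanged; this is the mechanism that produces the factor $\tfrac12$ attached to $i(C)$. At a bitangency event two transverse intersections are created or destroyed together on one ray, and whether this is an increase or a decrease is governed precisely by the relative position of the two branches at the far tangency point, i.e.\ by the type: exterior contacts (type $T$) and interior contacts (type $S$) contribute with opposite signs, assembling into $t(C)-s(C)=\sigma(C)$. The node events contribute the term $n(C)$, each node being accounted once through the passages of half-tangents across it. After collecting the constants from all event types, setting the total jump equal to zero and summing over the components rearranges into $\sigma(C)=n(C)+\tfrac{i(C)}{2}$.

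The step I expect to be the main obstacle is the local jump computation itself: for each event one must fix the orientation of the branches, the side of the plane, and the type of contact, and read off the exact signed change of $e_+$ and $e_-$, then verify that the bookkeeping is coherent when the two tangency points of a bitangent lie on different components, or on the same component traversed in either direction. Keeping these signs consistent is exactly what separates type $T$ from type $S$ and what yields the half-integer weight on the flexes. Since no step uses connectedness of $C$, the resulting count is insensitive to the number of components, which is why the classical one-component proof extends verbatim.
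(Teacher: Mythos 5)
Your proposal is essentially the paper's proof: the paper uses exactly the function $f_j=|D_+\cap C|-|D_-\cap C|$, shows it is locally constant away from flexes, nodes and bitangencies, computes the jumps ($-2$ at a flex, $-2$ when the basepoint crosses a node, $\pm2$ at each tangency point of a bitangent according to its type), and sums to zero around each closed component to get $4t(C)-4s(C)-4n(C)-2i(C)=0$. The one point to tighten is the node bookkeeping: the jump occurs only when the basepoint itself passes through the node (which happens twice per node, each time transferring one intersection point from $D_+$ to $D_-$), while a half-tangent sweeping across a node away from the basepoint produces no net jump; it is precisely the fact that nodes and bitangents are thus counted twice along the loop while each flex is counted once that yields the coefficient $\tfrac{1}{2}$ on $i(C)$.
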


\begin{proof}
On each component $C_j$ of $C$, we consider the function
$$f_j(t_j)=|D_+(\gamma_j(t_j))\cap C|-|D_-(\gamma_j(t_j))\cap C|,$$
giving the difference between the number of intersection points of the curve and the two half-tangents.

If $D(\gamma_j(t_j))$ is transverse to $C$ outside its tangency point $\gamma_j(t_j)$, intersection points deform continously in a neighborhood of $t_j$. It follows that $f_j$ is locally constant near $t_j$. Thus, $f_j$ is locally constant outside the values of $t_j$ where $\gamma_j(t_j)$ is a flex, a node, or $D(\gamma_j(t_j))$ is a bitangent. Let $J_j(t)=f_j(t_j^+)-f(t_j^-)$ be the value of the jump at $t_j$, where $t_j^\pm$ denotes the limit on the two sides of $t$. The jump function $J_j$ takes the following values, illustrated on Figure \ref{fig-walls-FB}:
\begin{itemize}[label=$\circ$]
\item Assume $\gamma_j(t_j)$ is a node. When crossing the node, passing from $t_j^-$ to $t_j^+$, one intersection point passes from $D_+(\gamma_j(t_j^-))$ to $D_-(\gamma_j(t_j^+))$, as depicted on Figure \ref{fig-walls-FB}(a). Thus, we have $J_j(t_j)=-2$.
\item Assume $\gamma_j(t_j)$ is a flex of $C_j$. Here, when crossing the flex, we also have an intersection point passing from $D_+(\gamma_j(t_j^-))$ to $D_-(\gamma_j(t_j^+))$, as depicted on Figure \ref{fig-walls-FB}(b), so that $J_j(t_j)=-2$.
\item Assume that the tangent $D(\gamma_j(t_j))$ is a bitangent: $D(\gamma_j(t_j))=D(\gamma_k(t'_k))$ for some $k$ and $t'_k$. Assume this bitangent is of type $T$. When passing from $t_j^-$ to $t_j^+$, we have two possibilities:
	\begin{itemize}[label=-]
	\item If $\gamma_k(t'_k)\in D_+(\gamma_j(t_j))$, we gain two intersection points in $D_+(\gamma_j(t_j))$, as depicted on Figure \ref{fig-walls-FB}(c).
	\item If $\gamma(t'_k)\in D_-(\gamma_j(t_j))$, we lose two intersection points in $D_-(\gamma_j(t_j))$, which would be depicted on Figure \ref{fig-walls-FB}(c) reversing the orientation of the arc.
	\end{itemize}
	In any case, we get that $J_j(t_j)=2$.
\item Assuming the bitangent is of type $S$, we get similarly $J_j(t_j)=-2$, as shown on Figure \ref{fig-walls-FB}(d).
\end{itemize}
As for each component we have $f_j(0)=f_j(1)$, we have
$$0=\sum_j f_j(1)-f_j(0)=\sum_j\sum_{t_j:J_j(t_j)\neq 0}J_j(t_j).$$
Grouping together the parameters giving the same node or bitangent, noticing that a bitangent appears twice (one per tangency point), we deduce the following relation:
$$4t(C)-4s(C)-4n(C)-2i(C) = 0,$$
which is the desired identity.
\end{proof}

\begin{figure}
    \centering
    \begin{tabular}{cc}
\begin{tikzpicture}[line cap=round,line join=round,>=triangle 45,x=0.4cm,y=0.4cm]
\clip(-8,-4) rectangle (8,4);
\draw [>->,thick,domain=-4:4, samples=100] 
 plot ({\x}, {0.25*\x*\x} );
\draw [thick,domain=-2:4, samples=100] 
 plot ({-0.3}, {\x} );
\draw (-2,1) node {$\bullet$};
\draw (2,1) node {$\bullet$};
\draw[red,->] (-2,1) --++(3,-3);
\draw[blue,-<] (-2,1)  --++(-2,2);
\draw[red,->] (2,1)--++(2,2);
\draw[blue,-<] (2,1)--++(-3,-3);
\end{tikzpicture} & \begin{tikzpicture}[line cap=round,line join=round,>=triangle 45,x=0.4cm,y=0.4cm]
\clip(-8,-4) rectangle (8,4);
\draw [>->,thick,domain=-4:4, samples=100] 
 plot ({\x}, {\x*\x*\x/16} );
\draw (-1.5,{-1.5^3/16}) node {$\bullet$};
\draw (1.5,{1.5^3/16}) node {$\bullet$};
\draw[red,->] (-1.5,{-1.5^3/16}) --++(6,{6*3*(1.5)^2/16});
\draw[blue,-<] (-1.5,{-1.5^3/16}) --++(-2,{-2*3*(1.5)^2/16});
\draw[red,->] (1.5,{1.5^3/16})--++(2,{2*3*(1.5)^2/16});
\draw[blue,-<] (1.5,{1.5^3/16})--++(-6,{-6*3*(1.5)^2/16});
\end{tikzpicture} \\
    (a) Passing through a node & (b) Passing through a flex \\
    \begin{tikzpicture}[line cap=round,line join=round,>=triangle 45,x=0.4cm,y=0.4cm]
\clip(-8,-4) rectangle (8,4);
\draw [>->,thick,domain=-3:3, samples=100] 
 plot ({\x-4}, {0.25*\x*\x} );
\draw [thick,domain=-3:3, samples=100] 
 plot ({\x+4}, {0.25*\x*\x} );
\draw ({-0.25-4},{0.25*(-0.25)^2}) node {$\bullet$};
\draw ({0.25-4},{0.25*(0.25)^2}) node {$\bullet$};
\draw[red,->] ({-0.25-4},{0.25*(-0.25)^2}) --++(12,{12*0.5*(-0.25)});
\draw[blue,-<] ({-0.25-4},{0.25*(-0.25)^2}) --++(-2,{-2*0.5*(-0.25)});
\draw[red,->] ({0.25-4},{0.25*(0.25)^2}) --++(11,{11*0.5*(0.25)});
\draw[blue,-<] ({0.25-4},{0.25*(0.25)^2}) --++(-2,{-2*0.5*(0.25)});
\end{tikzpicture} & \begin{tikzpicture}[line cap=round,line join=round,>=triangle 45,x=0.4cm,y=0.4cm]
\clip(-8,-4) rectangle (8,4);
\draw [>->,thick,domain=-3:3, samples=100] 
 plot ({\x-4}, {0.25*\x*\x} );
\draw [thick,domain=-3:3, samples=100] 
 plot ({\x+4}, {-0.25*\x*\x} );
\draw ({-0.25-4},{0.25*(-0.25)^2}) node {$\bullet$};
\draw ({0.25-4},{0.25*(0.25)^2}) node {$\bullet$};
\draw[red,->] ({-0.25-4},{0.25*(-0.25)^2}) --++(12,{12*0.5*(-0.25)});
\draw[blue,-<] ({-0.25-4},{0.25*(-0.25)^2}) --++(-2,{-2*0.5*(-0.25)});
\draw[red,->] ({0.25-4},{0.25*(0.25)^2}) --++(11,{11*0.5*(0.25)});
\draw[blue,-<] ({0.25-4},{0.25*(0.25)^2}) --++(-2,{-2*0.5*(0.25)});
\end{tikzpicture} \\
    (c) Passing through a type T bitangent & (d) Passing through a type S bitangent \\
    \end{tabular}
    \caption{Illustration of the values of the jump function at nodes, flexes and bitangents. The positive half-tangent is depicted in red, and the negative in blue. The alternative cases of passing through bitangents of types S and T are obtained by reversing the orientation of the curve and its tangents.}
    \label{fig-walls-FB}
\end{figure}

\section{A projective Fabricius-Bjerre Theorem}

We now aim to generalize the result by considering curves in $\RR P^2$. To do so, let $L_\infty$ be a line in $\RR P^2$ and consider curves generically transverse to $L_\infty$, which means the following: no line passing through a point of $C\cap L_\infty$ is a bitangent or tangent at a flex. Recall that $\TTT_\infty(C)$ is the set of tangents to $C$ at points of $C\cap L_\infty$. As $\RR P^2\backslash L_\infty=\RR^2$, we can use the Fabricius-Bjerre sign in this affine chart and make the associated signed count of bitangents. All constructions depend on the choice of this line $L_\infty$. If $L$ is a line distinct from $L_\infty$, we denote by $p_L$ the unique intersection point between $L$ and $L_\infty$.


Let $L$ be a second line in $\RR P^2$, different from $L_\infty$. Before getting to the generalization of Theorem \ref{theo-affine-case}, we provide a signed count of the tangents to $C$ passing through $p_L$. Let $D$ be a tangent to $C$ passing through $p_L$. The set $\RR P^2\setminus(D\cup L_\infty\cup L)$ has three connected components, each one adjacent to two of the lines. Let $U$ the connected component in which $C$ is contained near its tangency point with $D$: $\overline{U}\supset D$. We set the following signs, depicted on Figure \ref{fig-signs-pencils}.:
    \begin{itemize}[label=$\circ$]
        \item if $\overline{U}\supset L$ (the concavity of $C$ is toward $L$), the sign is $\epsilon_L(D)=+1$ (cases $D_2$ and $D_3$);
        \item if $\overline{U}\supset L_\infty$ (the concavity of $C$ is toward $L_\infty$), the sign is $\epsilon_L(D)=-1$ (cases $D_1$ and $D_4$).
    \end{itemize}

\begin{figure}
    \centering
   \begin{tikzpicture}[line cap=round,line join=round,>=triangle 45,x=0.6cm,y=0.6cm]
\clip(-2,-6) rectangle (8,6);

\draw[thick,dashed] (0,-5) to (0,5); \draw (0,3) node[left] {$L_\infty$};
\draw[thick] (-1,0) to (6,0);
\draw (5,0) node[below] {$L$};
\draw (0,0) node[below left] {$p_L$};

\draw (4,2) node {$\bullet$};
\draw[dotted] (4,2)--++(2,1);
\draw[dotted] (4,2)--++(-2,-1);
\draw (6,3) node[right] {$D_2:+$};
\draw [thick,domain=-0.5:0.5, samples=100] 
 plot ({4+2*\x-1*(-1)*\x*\x}, {2+1*\x+2*(-1)*\x*\x} );
 
\draw (2,4) node {$\bullet$};
\draw[dotted] (2,4)--++(1,2);
\draw[dotted] (2,4)--++(-1,-2);
\draw (2.5,5) node[right] {$D_1:-$};
\draw [thick,domain=-0.5:0.5, samples=100] 
 plot ({2+1*\x-2*(1)*\x*\x}, {4+2*\x+1*(1)*\x*\x} );

\draw (4,-2) node {$\bullet$};
\draw[dotted] (4,-2)--++(2,-1);
\draw[dotted] (4,-2)--++(-2,1);
\draw (6,-3) node[right] {$D_3:+$};
\draw [thick,domain=-0.5:0.5, samples=100] 
 plot ({4-2*\x-1*(-1)*\x*\x}, {-2+1*\x-2*(-1)*\x*\x} );

\draw (2,-4) node {$\bullet$};
\draw[dotted] (2,-4)--++(1,-2);
\draw[dotted] (2,-4)--++(-1,2);
\draw (2.5,-5) node[right] {$D_4:-$};
\draw [thick,domain=-0.5:0.5, samples=100] 
 plot ({2+1*\x-2*(1)*\x*\x}, {-4-2*\x-1*(1)*\x*\x} );
\end{tikzpicture}
    \caption{Description signs of tangents passing through $p_L$.}
    \label{fig-signs-pencils}
\end{figure}
    
If $L$ is itself tangent to $C$, we set $\epsilon_L(L)=0$. We consider the signed count of tangents to $C$ passing through $p_L$:
$$\sigma_L(C)=\sum_{D\ni p_L}\epsilon_L(D).$$

\begin{lem}\label{lem-signed-count-asymptotic-line}
Assume $L\cap C\cap L_\infty=\emptyset$. The signed count of tangents to $C$ passing through $p_L$ has the following value:
$$\sigma_L(C)=|C\cap L|-|C\cap L_\infty| .$$
Furthermore, if $L\in\TTT_\infty(C)$ is tangent of $C$ at some point of $C\cap L_\infty$, then
$$\sigma_L(C)=1+|C\cap L|-|C\cap L_\infty| .$$
\end{lem}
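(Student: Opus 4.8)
The plan is to adapt the same jump-function argument used in the proof of Theorem \ref{theo-affine-case}, but now tracking a half-tangent that rotates about the fixed point $p_L$ rather than rolling along the curve. Concretely, for each component $C_j$ I would consider the function recording, as the tangency parameter $t_j$ varies, the difference between the number of intersection points of $C$ with the two half-tangents into which $D(\gamma_j(t_j))$ is divided by the tangency point $\gamma_j(t_j)$. The key new feature is that the line $L_\infty$, together with $L$, cuts the affine chart into regions, and the sign $\epsilon_L(D)$ is exactly a bookkeeping device for which side of these lines the concavity of $C$ faces. I expect that counting the net jumps of this function as one goes around each component, and relating the total jump to the number of tangents through $p_L$ weighted by $\epsilon_L$, will produce the stated identity.

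The cleaner route, which I would pursue first, is to reduce directly to the affine statement by a projective transformation. Send $L$ to the line at infinity of a new affine chart; then tangents to $C$ through $p_L$ become tangents to $C$ in directions asymptotic to $L_\infty$ (the image of $L_\infty$ under the transformation), i.e. lines tangent to $C$ and meeting the image of $L_\infty$ at a prescribed point. In this picture $\sigma_L(C)$ becomes a signed count of tangents to $C$ in a fixed pencil of directions through a point, and such counts are governed by how the tangent direction winds relative to the two marked lines. I would then compute $\sigma_L(C)$ by a continuity/degree argument: as the tangent line rotates through the pencil based at $p_L$, each tangency to $C$ contributes $\pm 1$ according to the local concavity, and the total equals the difference of the intersection numbers of $C$ with the two lines bounding the relevant region, namely $|C\cap L|-|C\cap L_\infty|$.

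The hypothesis $L\cap C\cap L_\infty=\emptyset$ ensures that no tangency point is forced to sit at the common intersection locus of the configuration, so the pencil of lines through $p_L$ meets $C$ cleanly and the counting of $\pm 1$ contributions is unambiguous; I would use it precisely to guarantee that $L$ and $L_\infty$ are transverse to $C$ at the points being counted, so the intersection numbers $|C\cap L|$ and $|C\cap L_\infty|$ are well defined and stable. For the second assertion, where $L\in\TTT_\infty(C)$ is itself tangent to $C$ at a point of $C\cap L_\infty$, the configuration is degenerate: the tangent line $L$ now coincides with a line through $p_L$ that is tangent to $C$, and this contributes one extra tangency to the count that was excluded by the convention $\epsilon_L(L)=0$ in the generic case. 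I would handle this by a limiting argument, perturbing $L$ slightly and analyzing how the extra tangency enters, which should account for the $+1$ correction term.

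The main obstacle I anticipate is making the degree/winding argument fully rigorous while correctly matching the sign conventions: one must verify that the local contribution of each tangent to $\sigma_L$ agrees with the jump of the half-tangent intersection-count function, and that the global sum telescopes to exactly $|C\cap L|-|C\cap L_\infty|$ rather than its negative or a shifted value. Getting the boundary terms right — in particular how crossings of $L$ and of $L_\infty$ by the rotating tangent contribute with the correct signs — is the delicate part, and the special case $L\in\TTT_\infty(C)$ will require the most careful tracking of the degenerate tangency.
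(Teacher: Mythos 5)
Your ``cleaner route'' is, at its core, the paper's own argument: the paper projects $C$ to the pencil $\P$ of lines through $p_L$, notes that $L$ and $L_\infty$ cut $\P$ into two segments $\P_0,\P_1$, and runs a Morse-theoretic count on $\pi^{-1}(\P_i)$ --- the critical points of the projection are exactly the tangents through $p_L$, the index matches $\epsilon_L$, and the difference of the fiber cardinalities over the two endpoints is twice the signed count of tangents in that segment; adding the contributions of $\P_0$ and $\P_1$ gives the formula. Your rotating-line picture is the same mechanism, but you leave unresolved precisely the point you yourself flag: as the line rotates past a tangency the intersection number jumps by $\pm 2$, not $\pm 1$, and the identity only comes out with the right normalization because the pencil contributes \emph{two} segments from $L$ to $L_\infty$. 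Two further misreadings: the preliminary projective change of chart sending $L$ to infinity buys nothing, since Theorem \ref{theo-affine-case} counts bitangents, not tangents through a fixed point, so there is no affine statement to reduce to (your argument never actually uses this reduction); and the hypothesis $L\cap C\cap L_\infty=\emptyset$ is not a transversality condition on $L$ or $L_\infty$ --- since $L\cap L_\infty=\{p_L\}$ it says exactly that $p_L\notin C$, which is what makes the projection $\pi$ defined on all of $C$.

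The genuine gap is the second assertion. There the hypothesis fails by design: $L=T$ is tangent to $C$ at a point $p_T\in C\cap L_\infty$, so $p_L=p_T\in C$ and the projection is undefined at that point. The paper handles this directly: $\pi$ extends by continuity with $\pi(p_T)=T$, and the only change in the count is that the fiber over $L_\infty$ becomes $(C\cap L_\infty)\setminus\{p_T\}$, which is exactly where the $+1$ comes from. Your proposed perturbation of $L$ is not carried out, and your heuristic for the $+1$ (an ``extra tangency excluded by the convention $\epsilon_L(L)=0$'') points at the wrong source: the correction is a boundary-fiber effect at $L_\infty$, not a missing tangent line through $p_L$ in the sum $\sigma_L(C)$. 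A perturbation argument would have to track simultaneously how $p_L$ moves, how the set of tangents through $p_L$ and their signs change, and how $|C\cap L|$ changes as $L$ is moved off the tangency; none of that bookkeeping is done, so as written the second half of the lemma is unproved.
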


\begin{proof}
    First assume that $p_L\notin C$. We consider the projection from $C$ to the pencil $\P\simeq\RR P^1$ of lines passing through $p_L$:
    $$\pi:p\in C\longmapsto (pp_L)\in\P,$$
    where $(pp_L)$ is the unique line passing through $p$ and $p_L$. The pencil $\P$ contains the lines $L$ and $L_\infty$, which split $\P$ into segments $\P_0$ and $\P_1$, which we both orient from $L$ to $L_\infty$.

    A coordinate function on $\P_0$ (resp. $\P_1$) induces a Morse function on $\pi^{-1}(\P_0)\subset C$. Its critical points correspond precisely to points in $C$ whose tangent belongs to $\P$, and the sign $\epsilon_L$ actually coincides with the index. Therefore, for $\P_0$, we have that
    $$|\pi^{-1}(L)|-|\pi^{-1}(L_\infty)| = 2\sum_{D\in\P_0}\epsilon_L(D),$$
    where the sum is over the tangent to $C$ belonging to $\P_0$. As $\pi^{-1}(L)=C\cap L$ and $\pi^{-1}(L_\infty)=C\cap L_\infty$, adding the identities for $\P_0$ and $\P_1$ yields the count $\sigma_L(C)$ and its value.

    If $L=T$ is the tangent line to a point $p_T\in C\cap L_\infty$, the projection $\pi$ is not a priori defined at $p_T$. As $C$ is tangent to $T$ at $p_T$, $\pi$ extends by continuity with $\pi(p_T)=T\in\P$. We then proceed as before, with the only difference that $\pi^{-1}(L_\infty)=(C\cap L_\infty)\backslash\{p_T\}$, yielding the $1$ discrepancy.
\end{proof}

We can now give the projective version of Fabricius-Bjerre.

\begin{figure}
    \centering
\begin{tikzpicture}[line cap=round,line join=round,>=triangle 45,x=0.4cm,y=0.4cm]
\clip(-8,-4) rectangle (8,4);
\draw [>->,thick,domain=-4:4, samples=100] 
 plot ({\x}, {0.25*\x*\x} );
\draw[thick,dashed] (0,-3) to (0,4); \draw (0,3) node[right] {$L_\infty$};
\draw (-2,1) node {$\bullet$};
\draw (2,1) node {$\bullet$};
\draw[red] (-2,1) --++(2,-2);
\draw[blue,->] (0,-1) --++(2,-2);
\draw[blue,-<] (-2,1)  --++(-2,2);
\draw[red,->] (2,1)--++(2,2);
\draw[blue] (2,1)--++(-2,-2);
\draw[red,-<] (0,-1)--++(-2,-2);
\end{tikzpicture}
    \caption{Change of value of the jump function when the curve intersects $L_\infty$: intersection points change from blue ($D_-$) to red part ($D_+$).}
    \label{fig-walls-FB-proj}
\end{figure}

\begin{theo}\label{theo-projective-case}
Let $C$ be a curve in $\RR P^2$ transverse to $L_\infty$ with $a$ intersection points. We have the following signed count of bitangents:
$$\sigma(C)=n(C)+\frac{i(C)}{2}+\frac{a(a-2)}{2}-\sum_{T\in\TTT_\infty(C)} (|C\cap T|-1) .$$
\end{theo}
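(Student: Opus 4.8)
My plan is to re-run the half-tangent argument from the proof of Theorem \ref{theo-affine-case} inside the affine chart $\RR^2=\RR P^2\setminus L_\infty$, while carefully accounting for the new discontinuities of the functions $f_j$ caused by the line at infinity. On each component $C_j$ I keep $f_j(t_j)=|D_+(\gamma_j(t_j))\cap C|-|D_-(\gamma_j(t_j))\cap C|$, computed with the affine half-tangents relative to $L_\infty$. Since each $C_j$ is a closed loop, the jumps of $f_j$ still sum to zero, so $\sum_j\sum J_j=0$. By the genericity of $C$ with respect to $L_\infty$, all nodes, flexes and bitangents lie in the affine part, and they contribute to this sum exactly as in Theorem \ref{theo-affine-case}, namely the term $4t(C)-4s(C)-4n(C)-2i(C)$. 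The whole point is to evaluate the two new families of jumps that the projective setting introduces.

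The first new jump occurs when the tangency point $\gamma_j(t_j)$ itself crosses $L_\infty$, at one of the $a$ points $q_1,\dots,q_a$ of $C\cap L_\infty$. As $\gamma_j(t_j)$ escapes to infinity and reappears, the tangent tends to $T_{q_i}\in\TTT_\infty(C)$, and a local computation in coordinates centred at $q_i$ shows that each of the $|C\cap T_{q_i}|-1$ finite intersection points of $C$ with the tangent switches from one half-tangent to the other. Because reversing the orientation of $\gamma_j$ simultaneously reverses the parameter and swaps $D_\pm$, the resulting jump is orientation-independent and equals $+2(|C\cap T_{q_i}|-1)$; summing over the $a$ crossings gives the contribution $2\sum_{T\in\TTT_\infty(C)}(|C\cap T|-1)$.

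The second new jump occurs when an intersection point of $C$ with the tangent $D(\gamma_j(t_j))$ crosses the point at infinity $D\cap L_\infty$ of the tangent; this happens precisely when the direction of the tangent passes through some $q_k$, i.e. when $\gamma_j(t_j)$ is a tangency point of a tangent to $C$ through $q_k$. A single intersection point then passes through infinity, giving a jump $\pm2$, and the crux of the argument is that its sign is exactly the concavity sign $\epsilon_{T_{q_k}}(D)$ of Lemma \ref{lem-signed-count-asymptotic-line}: the three lines governing the crossing are $D$, $L_\infty$ and the asymptotic tangent $T_{q_k}$, which are exactly those defining $\epsilon_{T_{q_k}}$. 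Granting this, the jumps at a fixed $q_k$ total $2\sigma_{T_{q_k}}(C)$; since $T_{q_k}\in\TTT_\infty(C)$ has $p_{T_{q_k}}=q_k$, and since $T_{q_m}\not\ni q_k$ for $m\neq k$ (so only finite tangency points and the weight-zero line $T_{q_k}$ lie in the pencil through $q_k$), Lemma \ref{lem-signed-count-asymptotic-line} gives $\sigma_{T_{q_k}}(C)=1+|C\cap T_{q_k}|-a$. Summing over $k$ produces $2\sum_k(1+|C\cap T_{q_k}|-a)=4a+2\sum_{T\in\TTT_\infty(C)}(|C\cap T|-1)-2a^2$.

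Adding the three contributions, the vanishing of the total jump reads $4t(C)-4s(C)-4n(C)-2i(C)+2\Sigma+(4a+2\Sigma-2a^2)=0$, where $\Sigma=\sum_{T\in\TTT_\infty(C)}(|C\cap T|-1)$; dividing by $4$ and using $\sigma(C)=t(C)-s(C)$ and $2a^2-4a=2a(a-2)$ yields $\sigma(C)=n(C)+\tfrac{i(C)}{2}+\tfrac{a(a-2)}{2}-\Sigma$, as claimed. I expect the sign analysis of the two new jumps to be the main obstacle: checking that the first is uniformly $+2(|C\cap T|-1)$ regardless of orientation and crossing direction, and, above all, identifying the sign of the second with $\epsilon_{T_{q_k}}$ so that Lemma \ref{lem-signed-count-asymptotic-line} applies. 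Matching these signs is precisely what pins down the two specific combinations above and hence the exact shape of the formula.
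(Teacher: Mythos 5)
Your proposal is correct and follows essentially the same route as the paper's proof: the same jump-function bookkeeping for $f_j$, the same two new families of discontinuities (tangency point crossing $L_\infty$, contributing $2(|C\cap T|-1)$, and the tangent passing through a point of $C\cap L_\infty$, contributing $2\epsilon_T(D)$), and the same use of Lemma \ref{lem-signed-count-asymptotic-line} to evaluate the latter as $2\sigma_T(C)=2(1+|C\cap T|-a)$. The final algebra matches the paper's identity $4\sigma(C)-2i(C)-4n(C)+4\sum(|T\cap C|-1)-2a(a-2)=0$.
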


\begin{proof}
We proceed as in the affine case, considering a parametrization $\gamma_j:I_j\to\RR P^2$ of each component of $C$ and the functions
$$f_j(t_j)=|D_+(\gamma_j(t_j))\cap C|-|D_-(\gamma_j(t_j))\cap C|.$$
Outside a finite number of values, $f_j$ is locally constant. Let $J_j(t_j)=f(t_j^+)-f(t_j^-)$ be the jump function. On one side, we have the same jumps as for the compact curves:
	\begin{itemize}[label=$\circ$]
	\item $J_j(t_j)=2$ if $\gamma_j(t_j)$ is node or a flex, or if $D(\gamma_j(t_j))$ is a bitangent of type $T$;
	\item $J_j(t_j)=-2$ if $D(\gamma_j(t_j))$ is a bitangent of type $S$.
	\end{itemize}
However, we have now new jump points when $\gamma_j(t_j)\in L_\infty$ or if $D(\gamma_j(t_j))\ni p_T$ for $T\in\TTT_\infty(C)$, as depicted on Figure \ref{fig-walls-FB-proj}.
\begin{itemize}[label=$\circ$]	
\item Assume $\gamma_j(t_j)\in L_\infty$. In particular, $D(\gamma_j(t_j))=T\in\TTT_\infty(C)$. Going from $t_j^-$ to $t_j^+$, all points of $C\cap T-\{p_T\}$ move from $D_-$ to $D_+$. Therefore, we have $J_j(t_j)=2(|C\cap T|-1)$. The $-1$ is for the tangency point, not counted in the process. On Figure \ref{fig-walls-FB-proj}, we see that the tangent goes from (almost) fully blue to fully red.
\item Assume that $D(\gamma_j(t_j))\ni p_T$ for some $T\in\TTT_\infty(C)$. Depending on the concavity of $C$ at $\gamma_j(t_j)$, the intersection point near $p_T$ passes from $D_+(\gamma_j(t_j^-))$ to $D_-(\gamma_j(t_j^+))$ or conversely, as can be seen on Figure \ref{fig-wall-FB-proj-2}. We thus have $J_j(t_j)=2\epsilon_T(D(\gamma_j(t_j)))=\pm 2$.
\end{itemize}
As $0=f_j(1)-f_j(0)=\sum_{t_j:J_j(t)\neq 0}J_j(t_j)$, summing over the various components and grouping together the parameters corresponding to bitangents and nodes, we get the following identity:
$$4t(C)-4s(C)-2i(C)-4n(C) + 2\sum_{T\in\TTT_\infty(C)}(|T\cap C|-1) + \sum_{T\in\TTT_\infty(C)}\sum_{D\ni p_T}2\epsilon_T(D)=0.$$
We recognize the signed count $\sigma_T(C)$ computed in Lemma \ref{lem-signed-count-asymptotic-line}. Replacing its expression, we get that
$$4\sigma(C)-2i(C)-4n(C) +4\sum(|T\cap C|-1) -2a(a-2)=0.$$
\end{proof}

\begin{figure}
    \centering
\begin{tabular}{cc}
    \begin{tikzpicture}[line cap=round,line join=round,>=triangle 45,x=0.5cm,y=0.5cm]
\clip(-2,-2) rectangle (6,4);

\draw[thick,dashed] (0,-5) to (0,5); \draw (0,3) node[left] {$L_\infty$};
\draw[thick,domain=-4:4, samples=100] 
 plot ({3+\x}, {-2+2*\x*\x/9} );
\draw (0,0) node[below left] {$p_L$};

\draw (4,2) node {$\bullet$};
\draw[dotted] (4,2)--++(2,1);
\draw[dotted] (4,2)--(-6,-3);
\draw [>->,thick,domain=-0.7:0.7, samples=100] 
 plot ({4+2*\x-1*(-1)*\x*\x}, {2+1*\x+2*(-1)*\x*\x} );

\draw[blue,>-] (0,-1)--(4,2);
\draw[red,->] (4,2)--++({4*0.4},{3*0.4});
\draw[red,-<] (0,-1)--++({-4*0.3},{-3*0.3});

\draw[blue,>-] (0,1)--(4,2);
\draw[red,->] (4,2)--++({4*0.4},{0.4});
\draw[red,-<] (0,1)--++({-4*0.3},{-0.3});
\end{tikzpicture} &  \begin{tikzpicture}[line cap=round,line join=round,>=triangle 45,x=0.5cm,y=0.5cm]
\clip(-2,-2) rectangle (6,4);

\draw[thick,dashed] (0,-5) to (0,5); \draw (0,3) node[left] {$L_\infty$};
\draw[thick,domain=-4:4, samples=100] 
 plot ({3+\x}, {-2+2*\x*\x/9} );
\draw (0,0) node[below left] {$p_L$};

\draw (4,2) node {$\bullet$};
\draw[dotted] (4,2)--++(2,1);
\draw[dotted] (4,2)--(-6,-3);
\draw [<-<,thick,domain=-0.7:0.7, samples=100] 
 plot ({4+2*\x-1*(-1)*\x*\x}, {2+1*\x+2*(-1)*\x*\x} );

\draw[red,<-] (0,-1)--(4,2);
\draw[blue,-<] (4,2)--++({4*0.4},{3*0.4});
\draw[blue,->] (0,-1)--++({-4*0.3},{-3*0.3});

\draw[red,<-] (0,1)--(4,2);
\draw[blue,-<] (4,2)--++({4*0.4},{0.4});
\draw[blue,->] (0,1)--++({-4*0.3},{-0.3});
\end{tikzpicture} \\
   (a) intersection point goes from blue to red  & (b) intersection point goes from blue to red \\
\begin{tikzpicture}[line cap=round,line join=round,>=triangle 45,x=0.5cm,y=0.5cm]
\clip(-2,-2) rectangle (6,4);

\draw[thick,dashed] (0,-5) to (0,5); \draw (0,3) node[left] {$L_\infty$};
\draw[thick,domain=-4:4, samples=100] 
 plot ({3+\x}, {-2+2*\x*\x/9} );
\draw (0,0) node[below left] {$p_L$};

\draw (4,2) node {$\bullet$};
\draw[dotted] (4,2)--++(2,1);
\draw[dotted] (4,2)--(-6,-3);
\draw [>->,thick,domain=-0.7:0.7, samples=100] 
 plot ({4+2*\x+1*(-1)*\x*\x}, {2+1*\x+2*\x*\x} );

\draw[blue,>-] (0,-1)--(4,2);
\draw[red,->] (4,2)--++({4*0.4},{3*0.4});
\draw[red,-<] (0,-1)--++({-4*0.3},{-3*0.3});

\draw[blue,>-] (0,1)--(4,2);
\draw[red,->] (4,2)--++({4*0.4},{0.4});
\draw[red,-<] (0,1)--++({-4*0.3},{-0.3});
\end{tikzpicture} &  \begin{tikzpicture}[line cap=round,line join=round,>=triangle 45,x=0.5cm,y=0.5cm]
\clip(-2,-2) rectangle (6,4);

\draw[thick,dashed] (0,-5) to (0,5); \draw (0,3) node[left] {$L_\infty$};
\draw[thick,domain=-4:4, samples=100] 
 plot ({3+\x}, {-2+2*\x*\x/9} );
\draw (0,0) node[below left] {$p_L$};

\draw (4,2) node {$\bullet$};
\draw[dotted] (4,2)--++(2,1);
\draw[dotted] (4,2)--(-6,-3);
\draw [<-<,thick,domain=-0.7:0.7, samples=100] 
 plot ({4+2*\x+1*(-1)*\x*\x}, {2+1*\x+2*\x*\x} );

\draw[red,<-] (0,-1)--(4,2);
\draw[blue,-<] (4,2)--++({4*0.4},{3*0.4});
\draw[blue,->] (0,-1)--++({-4*0.3},{-3*0.3});

\draw[red,<-] (0,1)--(4,2);
\draw[blue,-<] (4,2)--++({4*0.4},{0.4});
\draw[blue,->] (0,1)--++({-4*0.3},{-0.3});
\end{tikzpicture} \\
   (c) intersection point goes from red to blue  & (d) intersection point goes from red to blue
\end{tabular}
    \caption{Change of value of the jump function when a tangent hits a point of $C\cap L_\infty$: if concavity is toward $T$ (case (a) and (b)), intersection point near $p_L$ goes from blue to red, and if concavity is toward $L_\infty$ (case (c) and (d)), intersection point near $p_L$ goes from red to blue.}
    \label{fig-wall-FB-proj-2}
\end{figure}

\section{Applications to real algebraic curves}

Let $\CCC$ be a generic smooth real algebraic curve in $\RR P^2$ of degree $d\geqslant 2$. In particular, the real part $\RR\CCC$ is a curve in our sense. If $L$ is a real line tangent to $\CCC$ at two points, there are two possibilities:
	\begin{itemize}[label=$\circ$]
	\item the \textit{split} bitangents if the tangency points are real,
	\item the \textit{non-split} bitangents if the tangency points are complex conjugate.
	\end{itemize}
Actually, the number of non-split bitangents $t_0(\CCC)$ is related to the number of real flexes $i(\RR\CCC)$ via Klein's formula \cite{klein1876formula}:
$$t_0(\CCC)+\frac{i(\RR\CCC)}{2}=\frac{d(d-2)}{2}.$$
We consider the following signed count of bitangents to $\CCC$:
$$\rho(\CCC)=t_0(\CCC)+\sigma(\RR\CCC).$$
The first term deals with non-split bitangents (thus counted with a positive sign), and the second term with the split bitangents, counted with the Fabricius-Bjerre sign. The curve being smooth, we have $n(\RR\CCC)=0$. Coupling Klein's formula with Theorem \ref{theo-projective-case}, we get the following.

\begin{theo}\label{theo-algebraic-case}
Let $\CCC$ be a generic smooth real algebraic curve of degree $d$ in $\RR P^2$ with $a$ intersection points with $L_\infty$. Then, we have the following signed count:
$$\rho(\CCC)=\frac{d(d-2)}{2}+\frac{a(a-2)}{2}-\sum_{T\in\TTT_\infty(\RR\CCC)}(|\RR\CCC\cap T|-1) .$$
Furthermore, this count is even and non-negative.
\end{theo}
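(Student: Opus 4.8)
The identity is a direct consequence of the two inputs already recorded. Since $\CCC$ is smooth, $\RR\CCC$ has no nodes, so $n(\RR\CCC)=0$ and Theorem \ref{theo-projective-case} reads $\sigma(\RR\CCC)=\frac{i(\RR\CCC)}{2}+\frac{a(a-2)}{2}-\sum_{T\in\TTT_\infty(\RR\CCC)}(|\RR\CCC\cap T|-1)$, while Klein's formula gives $t_0(\CCC)=\frac{d(d-2)}{2}-\frac{i(\RR\CCC)}{2}$. Adding the two, the terms $\pm\frac{i(\RR\CCC)}{2}$ cancel and we obtain the stated formula for $\rho(\CCC)=t_0(\CCC)+\sigma(\RR\CCC)$.

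For the non-negativity the plan is to bound the subtracted sum $S=\sum_{T\in\TTT_\infty(\RR\CCC)}(|\RR\CCC\cap T|-1)$ from above. Each $T$ is a real line tangent to $\CCC$ at a point $p_T\in\RR\CCC\cap L_\infty$; the genericity hypotheses (no line through a point of $\RR\CCC\cap L_\infty$ is a bitangent or a flex tangent) force $p_T$ to be an ordinary tangency point of $T$ and $T$ to be transverse to $\CCC$ elsewhere. Thus over $\CC$ the line $T$ meets $\CCC$ with multiplicity $2$ at $p_T$ and in at most $d-2$ further points, so $|\RR\CCC\cap T|\leq d-1$, that is $|\RR\CCC\cap T|-1\leq d-2$. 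The same genericity makes the $a$ tangents pairwise distinct, so $|\TTT_\infty(\RR\CCC)|=a$ and $S\leq a(d-2)$. Substituting,
$$\rho(\CCC)\geq\frac{d(d-2)}{2}+\frac{a(a-2)}{2}-a(d-2)=\frac{(d-a)(d-a-2)}{2}.$$
Because $L_\infty$ is transverse to $\CCC$, its $d$ complex intersection points split into $a$ real ones and $(d-a)/2$ conjugate pairs, so $b:=d-a$ is a non-negative even integer and $\frac{b(b-2)}{2}\geq 0$, giving $\rho(\CCC)\geq 0$. I expect this to be the main obstacle: the substance is in justifying the Bézout bound $|\RR\CCC\cap T|\leq d-1$ from the genericity conditions and then noticing the algebraic collapse to $\frac{(d-a)(d-a-2)}{2}$.

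For the evenness I would argue by parity. From the intersection count along $T$ above, the number of points of $\RR\CCC\cap T$ other than $p_T$ is $\equiv d\pmod 2$, i.e.\ $|\RR\CCC\cap T|-1\equiv d\pmod 2$; summing the $a$ terms gives $S\equiv ad\pmod 2$, while the conjugate-pair count gives $a\equiv d\pmod 2$. Writing $2\rho(\CCC)=d(d-2)+a(a-2)-2S$ and reducing modulo $4$: when $a,d$ are even every term is $\equiv 0\pmod 4$; when $a,d$ are odd one has $d(d-2)\equiv a(a-2)\equiv 3$ and $2S\equiv 2ad\equiv 2\pmod 4$, so again $2\rho(\CCC)\equiv 0\pmod 4$. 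In either case $\rho(\CCC)$ is even. (This is consistent with the fact that $\rho(\CCC)=t_0(\CCC)+t(\RR\CCC)-s(\RR\CCC)$ has the same parity as the total number $t_0(\CCC)+t(\RR\CCC)+s(\RR\CCC)$ of real bitangents.)
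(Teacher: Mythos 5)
Your proposal is correct and follows essentially the same route as the paper: the identity by combining Theorem \ref{theo-projective-case} (with $n(\RR\CCC)=0$) and Klein's formula, the lower bound via the Bézout estimate $|\RR\CCC\cap T|-1\leqslant d-2$ collapsing to $\frac{(d-a)(d-a-2)}{2}\geqslant 0$, and the parity via $a\equiv d$ and $|\RR\CCC\cap T|-1\equiv d\pmod 2$. The only cosmetic differences are that you justify non-negativity of $\frac{(d-a)(d-a-2)}{2}$ by evenness of $d-a$ where the paper notes $d-a\neq 1$, and you run the parity computation as a mod-$4$ case analysis where the paper simplifies to $\frac{a^2-d^2}{2}$; both are the same Bézout-based substance.
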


Theorem \ref{theo-algebraic-case} actually proves \cite[Conjecture 2]{larsonvogt2021enriched} and generalizes it for curves of any degree. This conjecture was already proved by Kummer-McKean \cite{kummer2024bounding} by a different method in the case of quartics.

\begin{proof}
As advertised, the signed count is a consequence of Theorem \ref{theo-projective-case} and Klein's formula. We now prove the count is even and non-negative. By Bezout's theorem, for each tangent $T$, we have that
$$|T\cap\RR\CCC|-1\leqslant d-2.$$
Summing over all the tangents of $\TTT_\infty(\RR\CCC)$, we have the following lower bound:
$$\rho(\CCC) \geqslant \frac{d(d-2)}{2}+\frac{a(a-2)}{2}-a(d-2) = \frac{(d-a)(d-a-2)}{2}\geqslant 0,$$
since by Bezout's theorem, $d-a\neq 1$. For the parity, Bezout's Theorem also tells us that $d\equiv a\mod 2$ and $|T\cap\RR\CCC|-1\equiv d\modulo 2$ since the tangency points count twice. Therefore,
\begin{align*}
\rho(\CCC) \equiv & \frac{a^2+d^2}{2}-a-d-ad \modulo 2 \\
\equiv & \frac{a^2-d^2}{2} \modulo 2 .
\end{align*}
As $a\equiv d\modulo 2$, we get $a^2\equiv d^2 \modulo 4$ and we conclude.
\end{proof}

Assuming $\CCC$ has no real point on $L_\infty$, \textit{i.e.} $a=0$, the signed count in this particular case is
$$\rho(\CCC)=\frac{d(d-2)}{2}.$$

\begin{expl}
For conics, \textit{i.e.} $d=2$, the tangents at $\RR\CCC\cap L_\infty$ have no further intersection points, so that the signed count is $0+0-0$, compatible with the fact there is no bitangent.

We now assume $\CCC$ is a cubic curve: the degree is $3$. Klein's formula tells us that there are $3$ real flexes. The number of tangents at $\RR\CCC\cap L_\infty$ is either $1$ or $3$. Furthermore, for any real tangent line $L$, by Bezout's theorem, $L\cap\CCC$ has three points counted with multiplicity. The tangency point counts for $2$. Thus, the remaining point is real, and we get $|L\cap\RR\CCC|-1=1$. In the end, if $a=1$ we get $\frac{3}{2}-\frac{1}{2}-1$, and if $a=3$ we get $\frac{3}{2}+\frac{3}{2}-3$, which is compatible with the absence of bitangent in this case as well.
\end{expl}

\begin{expl}
The first interesting case is the case of quartic curves. Assuming transversality, a quartic may have $a=0,2$ or $4$ real intersection points with $L_\infty$.
	\begin{itemize}[label=$\triangleright$]
	\item If $a=0$, the real locus is compact in $\RR^2$, and we recover the signed count from \cite{larsonvogt2021enriched}.
	\item If $r=2$, we have tangents at points of $L_\infty$. The signed count is
$$\rho(\CCC)=4-(|T_1\cap \RR\CCC|-1)-(|T_2\cap \RR\CCC|-1) .$$
Thanks to Bezout's theorem, we have that $|T\cap \RR\CCC|-1\leqslant 2$ for every tangent $T$ to $C$. Thus, $\rho(\CCC)\geqslant 0$ and the count may take the values $0,2,4$.
	\item If $r=4$, we now a have $4$ asymptotic lines, and we get
$$\rho(\CCC) = 4+\frac{4(4-2)}{2}-\sum_1^4 (|T_j\cap\RR\CCC|-1) = 8-\sum_1^4 (|T_j\cap\RR\CCC|-1).$$
We conclude similarly than $\rho(\CCC)\geqslant 0$ and can take the values $0,2,4,6,8$.
	\end{itemize}
\end{expl}

Using the generalization of Klein's formula \cite{Schuh} to curves with nodes and cusp, we can generalize the signed count to real algebraic curves with nodes.

\begin{prop}
    Let $\CCC$ be a degree $d$ real nodal algebraic curve in $\PP^2$ with $N$ nodes (both real and complex), $n_\RR$ real nodes, $i$ real flexes and $a$ intersection points with $L_\infty$. Then, we have the following signed count:
    $$\rho(\CCC) = n_\RR-N + \frac{d(d-2)}{2} + \frac{a(a-2)}{2}-\sum_{T}(|C\cap T|-1)$$
\end{prop}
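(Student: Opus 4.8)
The plan is to combine the projective Fabricius--Bjerre count of Theorem \ref{theo-projective-case}, applied to the real locus, with the generalized Klein formula of \cite{Schuh}, in the same spirit as the proof of Theorem \ref{theo-algebraic-case}. By definition $\rho(\CCC)=t_0(\CCC)+\sigma(\RR\CCC)$, the first term recording the non-split bitangents (positively) and the second the Fabricius--Bjerre signed count of the split ones. First I would feed the one-dimensional part of $\RR\CCC$ into Theorem \ref{theo-projective-case}: it is still a curve in our sense, now carrying genuine self-intersections, and the theorem yields
$$\sigma(\RR\CCC)=n(\RR\CCC)+\frac{i}{2}+\frac{a(a-2)}{2}-\sum_{T\in\TTT_\infty(\RR\CCC)}(|\RR\CCC\cap T|-1).$$
Since the asymptotic terms $\tfrac{a(a-2)}{2}$ and $\sum_T(|\RR\CCC\cap T|-1)$ already match those of the target, the whole problem collapses to identifying $t_0(\CCC)+n(\RR\CCC)+\tfrac{i}{2}$ with $n_\RR-N+\tfrac{d(d-2)}{2}$.

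Second, I would make the generalized Klein formula explicit. The real nodes of $\CCC$ split into crunodes (two real branches crossing) and acnodes, or solitary points (two complex-conjugate branches meeting at a real point); write $n_\RR=n_\RR^\times+n_\RR^\circ$ accordingly. The crunodes are exactly the self-intersections of the immersed curve, so $n(\RR\CCC)=n_\RR^\times$, while acnodes are isolated real points and complex nodes are absent from $\RR\CCC$ altogether. Plücker's class formula gives $d^\vee=d(d-1)-2N$ for a nodal curve, and Schuh's generalization of Klein's identity reads, in the absence of cusps,
$$i+2t_0(\CCC)=d^\vee-d+2n_\RR^\circ=d(d-2)-2N+2n_\RR^\circ,$$
the solitary points supplying the real correction term. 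Equivalently $t_0(\CCC)+\tfrac{i}{2}=\tfrac{d(d-2)}{2}-N+n_\RR^\circ$.

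Finally I would substitute this into $\rho(\CCC)$ and use the decomposition $n(\RR\CCC)+n_\RR^\circ=n_\RR^\times+n_\RR^\circ=n_\RR$:
\begin{align*}
\rho(\CCC) &= \Big(\tfrac{d(d-2)}{2}-N+n_\RR^\circ\Big)+n_\RR^\times+\frac{a(a-2)}{2}-\sum_{T}(|\RR\CCC\cap T|-1)\\
&= n_\RR-N+\frac{d(d-2)}{2}+\frac{a(a-2)}{2}-\sum_T(|\RR\CCC\cap T|-1),
\end{align*}
which is the asserted identity.

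The main obstacle is bookkeeping of node types rather than new geometric input. One must check that Theorem \ref{theo-projective-case} genuinely governs the nodal case: the jump analysis sees only the immersed arcs, and acnodes, being isolated and carrying no real tangent line, neither produce bitangents nor contribute to the half-tangent intersection counts, so they may be excluded from the curve fed to the theorem and re-enter solely through Klein's correction term. The delicate point is confirming that this correction is exactly $2n_\RR^\circ$, with the sign that makes crunodes and acnodes contribute asymmetrically; this is precisely where the hypothesis that $\CCC$ is nodal (no cusps) is used. The genericity assumptions --- no node lying on $L_\infty$ or on a tangent $T\in\TTT_\infty(\RR\CCC)$, and bitangents tangent only at smooth real points --- are what keep the three families of correction terms disjoint and guarantee the clean identity $n(\RR\CCC)=n_\RR^\times$.
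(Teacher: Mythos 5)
Your proposal is correct and follows essentially the same route as the paper: you apply Theorem \ref{theo-projective-case} to $\RR\CCC$, whose self-intersections are exactly the hyperbolic real nodes, and combine it with Schuh's generalized Klein formula, your identity $i+2t_0=d(d-2)-2N+2n_\RR^\circ$ being the paper's $d+i+2t_0=d(d-1)-2N+2n_0$ with $n_\RR^\circ=n_0$. The paper's proof is simply a terser version of this same bookkeeping (it ends with ``conclude as in Theorem \ref{theo-algebraic-case}''), so your extra care about acnodes and genericity only makes explicit what the paper leaves implicit.
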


\begin{proof}
    Let $n_0$ be the number of real isolated nodes and $n_2$ the number of hyperbolic nodes, so that $n_\RR=n_0+n_2$. Klein's formula for nodal curves \cite{Schuh} states as follows:
    $$ d+i+2t_0 = d(d-1)-2N+2n_0,$$
    while Theorem \ref{theo-projective-case} gives
    $$\sigma(\RR\CCC) = n_2+\frac{i}{2} + \frac{a(a-2)}{2}-\sum_{T}(|C\cap T|-1).$$
    We conclude as in Theorem \ref{theo-algebraic-case}.
\end{proof}

\bibliographystyle{alpha}
\bibliography{biblio}

\end{document}